\newtheorem{lemma}{Lemma}
\newtheorem{remark}{Remark}
\newtheorem{theorem}{Theorem}
\newtheorem{example}{Example}
\numberwithin{equation}{section}
\begin{document}
\title[quasi-Einstein metrics on $\Bbb{H}^{n}\times \Bbb{R}$]{A note on the uniqueness\\ of quasi-Einstein metrics on $\Bbb{H}^{n}\times \Bbb{R}$}
\author{E. Ribeiro Jr$^{1}$}
\address{$^{1}$ Departamento de Matem\'{a}tica-Universidade Federal do Cear\'{a}\\
60455-760-Fortaleza-CE-BR} \email{ernani@mat.ufc.br}
\thanks{$^{1}$ Partially supported by FUNCAP-Brazil}
\author{K. Bezerra $^{2}$}
\address{$^{2}$Current: Departamento de Matem\'{a}tica-Universidade Federal do Cear\'{a},
60455-760-Fortaleza-CE-BR\\
Permanent: Departamento de Matem\'{a}tica-Universidade Federal do Piau\'i, 64049-550-Teresina-PI-BR}\email{kelton@ufpi.edu.br}
\thanks{$^{2}$ Partially supported by CAPES-Brazil}
\keywords{quasi-Einstein metric, static metric, warped product,
general relativity} \subjclass[2000]{Primary 53C25, 53C20, 53C21;
Secondary 53C65} \urladdr{http://www.mat.ufc.br}
\date{July 15, 2013}

\begin{abstract}
The aim of this note is to give an explicit description of
quasi-Einstein metrics on $\Bbb{H}^{n}\times \Bbb{R}.$ We
shall construct two examples of quasi-Einstein metrics on this manifold and then we shall prove the uniqueness of these examples.
Finally, we clarify the relationship between quasi-Einstein metrics and
static metrics in the quoted space.
\end{abstract}

\maketitle

\section{Introduction}
In the last years very much attention has been given to Einstein
metrics and its generalizations, for instance Ricci solitons and
quasi-Einstein metrics. Ricci solitons model the formation of
singularities in the Ricci flow and  they correspond to self-similar
solutions, i.e. they are stationary points of this flow in the space
of metrics modulo diffeomorphisms and scalings; for more details in
this subject we recommend  the survey due to Cao \cite{Cao}  and the
references therein. On the other hand, one of the motivation to
study quasi-Einstein metrics on a Riemannian manifold is its closed
relation with  warped product Einstein metrics, see e.g.
\cite{besse}, \cite{csw}, \cite{mastrolia}, \cite{rimoldi}, \cite{hepeterwylie} and \cite{kk}.

In \cite{besse} it was proposed to find new examples of Einstein metrics on warped products. The authors wrote:
\begin{flushright}
\begin{minipage}[t]{4.37in}
 \emph{``Nevertheless warped products do give new examples of complete Einstein manifolds and the Einstein equations are quite interesting".}[see chapter 9 page 265.]
 \end{minipage}
\end{flushright}

Based on this problem we shall give explicit details of how to
solve this question by using quasi-Einstein theory.  One
fundamental ingredient to understand the behavior of such a class of
manifolds is the $m$-Bakry-Emery Ricci tensor which appeared
previously in \cite{qian} and \cite{bakry}. It is given by
\begin{equation}
\label{bertens}
Ric_{f}^{m}=Ric+\nabla ^2f-\frac{1}{m}df\otimes df,
\end{equation}where $f$ is a smooth function on $M^n$ and $\nabla ^2f$ stands for the Hessian form.

This tensor was extended recently, independently, by Barros and Ribeiro Jr \cite{brG} and Limoncu \cite{limoncu}. More precisely, they extended (\ref{bertens}) for an arbitrary vector field $X$ on $M^n$ as follows:
\begin{equation}
\label{eqprincqem}
Ric_{X}^{m}=Ric+\frac{1}{2}\mathcal{L}_{X}{g}-\frac{1}{m}X^{\flat}\otimes X^{\flat},
\end{equation}where $\mathcal{L}_{X}{g}$ and $X^{\flat}$ denote, respectively, the Lie derivative on $M^n$ and the canonical $1$-form associated to $X.$

Letting $X\in\mathfrak{X}(M),$ in this more general setting, we say that $(M^n,\,g)$ is a quasi-Einstein metric, if there exist constants $0<m\leq\infty$ and $\lambda$ such that
\begin{equation}
\label{eqprincqem1}
Ric_{X}^{m}=\lambda g.
\end{equation} The metric $g$ will be called a quasi-Einstein metric.

On the other hand, when $m$ goes to infinity, equation
(\ref{eqprincqem1}) reduces to the one associated to a Ricci soliton.
Whereas, when $m$ is a positive integer and $X$ is gradient.  Following the terminology used for Ricci
solitons, a  quasi-Einstein structure $g$ on a manifold $M^n$ will be
called \emph{expanding}, \emph{steady} or \emph{shrinking},
respectively, if  $\lambda<0,\,\lambda=0$ or $\lambda>0$. Moreover,
a quasi-Einstein structure will be called \emph{trivial} if $
X\equiv0$. Otherwise, it will be \emph{nontrivial}. We notice that
the triviality implies that $M^n$ is an Einstein manifold.

Classically the study of such metrics is considered when $X$ is the
gradient of a smooth function $f$ on $M^n.$ This will be the case
considered in this work. From now on, when quoting the
quasi-Einstein manifold we will refer to the gradient case.
Therefore, a Riemannian manifold $(M^n,\,g),$ $n\geq 2,$ will be called
quasi-Einstein metric if there exist a smooth potential function $f$
on $M^n$ and a constant $\lambda$ satisfying the following
fundamental equation
\begin{equation}
\label{eqqem}
Ric_{f}^{m}=Ric+\nabla ^2f-\frac{1}{m}df\otimes
df=\lambda g.
\end{equation}

In order to proceed we remember that on a compact manifold $M^n$ an
$\infty-$quasi-Einstein metric (Ricci soliton) with $\lambda\leq0$
is trivial, see \cite{monte}. The same result was proved previously
in \cite{kk}  for quasi-Einstein metric on compact manifold with $m$
finite. Besides, we known that compact shrinking Ricci solitons have
positive scalar curvature, see e.g. \cite{monte}. An extension of
this result for shrinking quasi-Einstein metrics with $1\leq
m<\infty$ was obtained in \cite{csw}. Some results on scalar curvature estimates for quasi-Einstein manifolds appears in \cite{rimoldi}. Recently, in
\cite{brozosGarciagavino} Brozos-V\'{a}zquez et al. proved that
locally conformally flat quasi-Einstein metrics are globally
conformally equivalent to a space form or locally isometric to a
$pp$-wave or a warped product. In \cite{hepeterwylie} was given some
classification for quasi-Einstein metrics where the base has non
empty boundary. Moreover, they proved a characterization for
quasi-Einstein metrics is locally conformally flat.

We point out that Bakry and Ledoux \cite{bakry} proved an analogue
of Myers's theorem and also they presented a new analytic proof of
Cheng's theorem based on Sobolev inequalities. Bakry and Ledoux's
result implies that every shrinking quasi-Einstein metric must be
compact. Moreover, Case proved nonexistence of steady
quasi-Einstein metric with $\mu\leq 0,$ where $\mu$ is a constant satisfying $\Delta_{f}f=-m\mu e^{\frac{2}{m}f},$ save for the trivial ones, for more details see \cite{case}.
Combining Bakry-Ledoux's result with Case's theorem we conclude that
every nontrivial noncompact quasi-Einstein metrics are expanding
provided $\mu\leq 0.$ For instance, it is well-known that
$\Bbb{H}^n$ with its canonical metric admits a nontrivial expanding
quasi-Einstein structure. Moreover, it is important to highlight that a
Euclidean space $\Bbb{R}^n$ and a Euclidean sphere $\Bbb{S}^n$ do
not admit a nontrivial quasi-Einstein structure.

In  \cite{bairdexplicit}, Baird has proved an explicit
constructing of Ricci solitons structures on $Nil^4$ by using ODE
technics. Inspired on ideas developed in Baird's work we shall
describe explicitly the quasi-Einstein structures on  $\Bbb{H}^{n}\times \Bbb{R}.$

From now on, we shall fix the standard metric on $\Bbb{H}^{n}\times \Bbb{R}$ which is given by
\begin{equation}
\label{metHR}
 g=\frac{1}{x_{n}^{2}}\sum_{i=1}^{n}dx_{i}^{2}+dt^{2}.
\end{equation}
We shall prove that $\big(\Bbb{H}^{n}\times \Bbb{R},g \big)$ admits only two quasi-Einstein structures. Our first example will be
obtained with a Killing vector field. More precisely, we have the
following example.
\begin{example}
\label{ex1} We consider $\mathbb{H}^{n}\times\mathbb{R}$ with
standard metric (\ref{metHR}) and the potential function
$f(x,t)=\pm\sqrt{(n-1)m}t$. It is easy to see that $\nabla
f=\pm\sqrt{(n-1)m}\partial_{t}$, hence $Hess\,f=0.$ Therefore
$(\mathbb{H}^{n}\times\mathbb{R},\,g,\,\nabla f,\,-(n-1))$ is a
quasi-Einstein metric.
\end{example}
Next we shall describe our second example, where its
associated vector field is not a Killing vector field.
\begin{example}
\label{ex2} We consider $\mathbb{H}^{n}\times\mathbb{R}$ with
standard metric (\ref{metHR}) and the potential function
$f(x,t)=-m\ln(\cosh(\eta t+a)),$ where $a\in\mathbb{R}$ and
$\eta=\sqrt{\frac{n-1}{m}},$ hence $\nabla f=-m\eta\tanh(\eta
t+a)\partial_{t}.$ Under these conditions
$(\mathbb{H}^{n}\times\mathbb{R},\,g,\,\nabla f,\, -(n-1))$ is a
quasi-Einstein metric.
\end{example}

For  the sake of completeness it is important to highlight that a
basic object of study in general relativity is a Lorentzian manifold
$(M^4,\,g)$ satisfying Einstein's equation
$$Ric-\frac{1}{2}Rg=8\pi T,$$ where $R$ and $T$ stand, respectively,
for the scalar curvature and the stress-energy tensor of matter. The
first solution of the Einstein equation (with $T=0$) was obtained by
Schwarzchild in $1916$, for more information about this subject we
recommend \cite{Corvino}. About this issue it is important to recall
that a static space-time is a four-dimensional manifold which
possesses a time-like vector field and a spacelike hypersurface
which is orthogonal to the integral curves of this Killing field,
see \cite{wald}. Static space-times are  special and important
global solution to Einstein equation in general relativity. Note that $1$-quasi-Einstein metrics satisfying
$\Delta e^{-f}+\lambda e^{-f}=0$ are \emph{static metrics} with
cosmological constant $\lambda.$ These static metrics have been
studied extensively because their connection with scalar curvature,
the positive mass theorem and general relativity, for more details
see e.g. \cite{Anderson}, \cite{AndersonKhuri} and \cite{Corvino}.
On the other hand, we recall that for a Riemannian manifold
$(M^n,\,g)$ the linearization $\mathfrak{L}_{g}$ of the scalar
curvature operator is given by
$$\mathfrak{L}_{g}(h)=-\Delta_{g}(tr_{g}(h))+div(div(h))-g(h,Ric_{g}),$$ where
$h$ is a $2$-tensor. Moreover, the formal $L^{2}$-adjoint
$\mathfrak{L}_{g}^{*}$ of $\mathfrak{L}_{g}$ is given by
\begin{equation}
\label{eqlinear}
\mathfrak{L}_{g}^{*}(u)=-(\Delta_{g}u)g+Hess\,u-uRic_{g},
\end{equation}
where $u$ is a smooth function on $M^n.$

With these definitions $u$ is a nontrivial element in the kernel of
$\mathfrak{L}_{g}^{*}$ if and only if the warped product metric
$\overline{g}=-u^{2}dt^{2}+g$ is Einstein, for more details see
Proposition 2.7 in \cite{Corvino}.

Since $(M^n,\,g,\,\nabla f,\lambda)$ is a quasi-Einstein metric with
$m<\infty$ we may consider $u=e^{-\frac{f}{m}}$ to rewrite the
fundamental equation (\ref{eqqem}) as
\begin{equation}
\label{equ}
 Ric-\frac{m}{u}Hess\,u=\lambda g.
\end{equation}
Therefore, combining (\ref{eqlinear}) and (\ref{equ}) we may
conclude from Examples \ref{ex1} and \ref{ex2} that
$\big(\mathbb{H}^{n}\times\mathbb{R},g\big)$ produce Einstein warped products.

Now, we are in position to announce our main result. It will show
that the above examples are unique. More precisely, we have the
following result.

\begin{theorem}
\label{thmA} Let $\big(\mathbb{H}^{n}\times\mathbb{R},\,g,\,\nabla
f,\,\lambda)$ be a quasi-Einstein metric. Then, up to an additive constant, this structure is
given according either Example \ref{ex1} or Example \ref{ex2}.
\end{theorem}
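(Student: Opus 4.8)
The plan is to work in the global coordinates $(x_{1},\dots,x_{n},t)$ of the metric (\ref{metHR}) and to exploit the substitution $u=e^{-f/m}>0$, which converts the fundamental equation (\ref{eqqem}) into the linear equation (\ref{equ}). Since $\mathbb{H}^{n}\times\mathbb{R}$ is the Riemannian product of the Einstein manifold $\mathbb{H}^{n}$ (for which $Ric=-(n-1)g_{\mathbb{H}^{n}}$) and the flat line, its Ricci tensor is $Ric=-(n-1)g+(n-1)\,dt\otimes dt$, so (\ref{equ}) becomes
\begin{equation*}
\nabla^{2}u=\frac{u}{m}\Big(-(\lambda+n-1)\,g+(n-1)\,dt\otimes dt\Big).
\end{equation*}
The right-hand side is diagonal in the coordinate frame, so I would first record the Christoffel symbols of (\ref{metHR}) --- the only nonzero ones are $\Gamma^{n}_{ii}=x_{n}^{-1}$ and $\Gamma^{i}_{in}=-x_{n}^{-1}$ for $i<n$, together with $\Gamma^{n}_{nn}=-x_{n}^{-1}$ --- and expand $(\nabla^{2}u)_{ij}=\partial_{i}\partial_{j}u-\Gamma^{k}_{ij}\partial_{k}u$ component by component.

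The off-diagonal and mixed components are treated first. The components involving $t$ give $\partial_{t}\partial_{j}u=0$ for $j=1,\dots,n$, so $u=V(t)+W(x_{1},\dots,x_{n})$; the $(i,j)$ components with $1\le i<j\le n-1$ give $\partial_{i}\partial_{j}W=0$; and the $(i,n)$ components with $1\le i\le n-1$ give $\partial_{n}\big(x_{n}\,\partial_{i}W\big)=0$. Integrating these in turn forces the separated form
\begin{equation*}
u=V(t)+\frac{1}{x_{n}}\sum_{i=1}^{n-1}C_{i}(x_{i})+E(x_{n})
\end{equation*}
for smooth one-variable functions $V$, $E$ and $C_{1},\dots,C_{n-1}$.

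Next I would substitute this expression into the three diagonal components: $(t,t)$ reads $\partial_{t}^{2}u=-\tfrac{\lambda}{m}u$, while $(i,i)$ reads $\partial_{i}^{2}u-x_{n}^{-1}\partial_{n}u=-\tfrac{\lambda+n-1}{m}x_{n}^{-2}u$ and $(n,n)$ reads $\partial_{n}^{2}u+x_{n}^{-1}\partial_{n}u=-\tfrac{\lambda+n-1}{m}x_{n}^{-2}u$. If $\lambda\neq0$, the $(t,t)$ equation forces $x_{n}^{-1}\sum_{i}C_{i}+E$ to be constant; differentiating in $x_{i}$ then shows that each $C_{i}$ is constant, and the whole spatial part collapses, leaving $u=u(t)$. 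Feeding $u=u(t)$ back into the $(i,i)$ equation gives $(\lambda+n-1)u\equiv0$, and since $u>0$ we conclude $\lambda=-(n-1)$; in particular no shrinking structure survives --- alternatively this follows at once from the Bakry--Ledoux compactness theorem quoted above, as $\mathbb{H}^{n}\times\mathbb{R}$ is noncompact. The remaining case $\lambda=0$ is ruled out separately by the same method: one further round of separation of variables in the $(i,i)$, $(n,n)$ and $(t,t)$ equations again collapses the entire spatial part and forces $u\equiv0$, which is impossible. Hence $\lambda=-(n-1)$ and $u=u(t)$ in all cases.

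With $\lambda=-(n-1)$ and $u=u(t)$, the two spatial diagonal equations become identities, and only the ordinary differential equation $u''=\tfrac{n-1}{m}u=\eta^{2}u$, with $\eta=\sqrt{(n-1)/m}$, remains. Its general solution is $u=Ae^{\eta t}+Be^{-\eta t}$, and the positivity of $u=e^{-f/m}$ on all of $\mathbb{R}$ forces $A,B\geq0$, not both zero, and discards the sign-changing solutions. If exactly one of $A,B$ vanishes, then $u=Ce^{\pm\eta t}$, i.e. $f=\mp\sqrt{(n-1)m}\,t$ up to an additive constant, which is Example~\ref{ex1}; if both are positive, then $u=Ae^{\eta t}+Be^{-\eta t}=C\cosh(\eta t+a)$ with $C=2\sqrt{AB}$ and $a=\tfrac{1}{2}\ln(A/B)$, i.e. $f=-m\ln\cosh(\eta t+a)$ up to an additive constant, which is Example~\ref{ex2}. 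This exhausts all possibilities. I expect the main obstacle to be the bookkeeping in the diagonal step --- deciding which separated pieces can survive, and invoking the positivity and global smoothness of $u$ at the right moments to discard the spurious ($\lambda\geq0$ and sign-changing) solutions.
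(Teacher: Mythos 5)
Your proof is correct, and it takes a genuinely different route from the paper's. The paper works directly with $f$ in the orthonormal frame $E_{i}=x_{n}\partial_{x_{i}}$, $E_{n+1}=\partial_{t}$: it writes the quasi-Einstein equation as six scalar PDEs (Lemma~\ref{prop2}), solves the nonlinear Riccati equation $h'=\tfrac{h^{2}}{m}+\lambda$ for $h=\partial f/\partial t$ (discarding the $\tan$ and $-m/(t+mc)$ families because they blow up in finite time while $f$ is globally smooth), and then uses the mixed equation $\partial_{i}\partial_{t}f=\tfrac{1}{m}\partial_{i}f\,\partial_{t}f$ to force $a$ constant and $\partial f/\partial x_{i}\equiv 0$; the sign $\lambda<0$ is imported from Case--Shu--Wei. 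You instead linearize at the outset via $u=e^{-f/m}$ and reduce to $\nabla^{2}u=\tfrac{u}{m}(Ric-\lambda g)$ in coordinates, letting separation of variables plus the positivity of $u$ do all the work: the off-diagonal equations force $u=V(t)+x_{n}^{-1}\sum C_{i}(x_{i})+E(x_{n})$, the diagonal ones collapse the spatial part and yield $\lambda=-(n-1)$ internally (no appeal to external nonexistence results is needed), and the linear ODE $u''=\eta^{2}u$ with $u>0$ on all of $\mathbb{R}$ gives exactly $Ce^{\pm\eta t}$ or $C\cosh(\eta t+a)$, i.e.\ Examples~\ref{ex1} and~\ref{ex2} up to an additive constant in $f$. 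I checked the Christoffel symbols, all component equations, and the $\lambda=0$ elimination (the two spatial diagonal equations are incompatible for a nonzero $u=F(x_{n})$, forcing $F\equiv 0$); everything goes through. The trade-off: the paper's Riccati argument is shorter once $\lambda<0$ is granted, while your linear argument is self-contained, replaces the blow-up and differentiability discussions by the single global constraint $u>0$, and makes the $\cosh$ structure of Example~\ref{ex2} transparent.
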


\begin{remark}
We point out that Theorem \ref{thmA} shows that there are two 
natural Einstein warped product structures with base $\mathbb{H}^{n}\times\mathbb{R}$
for every $m$ with warped function given by $e^{-\frac{f}{m}},$
where $f$ is such as in Examples \ref{ex1} and \ref{ex2}. Moreover,
we recall that Riemannian metrics with $Ker\,\mathfrak{L_{g}^{*}}$
nontrivial are called \emph{static}. Therefore, taking into account
$m=1,$ it is easy to check that $\Delta e^{-f}+\lambda e^{-f}=0,$
which permits to conclude from (\ref{eqlinear}) that
$\mathbb{H}^{n}\times\mathbb{R}$ endowed with such metric is a static metric.
\end{remark}

\begin{remark}
We highlight  that, mutatis mutandis, Theorem \ref{thmA} can be obtained to $\Bbb{E}^{n}\times\Bbb{R},$ where $\Bbb{E}^{n}$ is a Einstein manifold with negative scalar curvature.
\end{remark}

\section{Preliminaries}

Throughout this section we collect a couple of lemmas that will be
useful in the proofs of our results. Our object of study is
$\Bbb{H}^n\times \Bbb{R}$ given by
 $$\{(x,t)\in \Bbb{R}^n\times
 \Bbb{R};\,x_{n}>0,\,\hbox{where}\,x=(x_{1},...,x_{n})\}$$ endowed
 with metric
 \begin{eqnarray*}
 g=\frac{1}{x_{n}^{2}}\sum_{i=1}^{n}dx_{i}^{2}+dt^{2}.
 \end{eqnarray*}

It is easy to see that
 $\{E_{i}=x_{n}\partial_{x_{i}},\,E_{n+1}=\partial_{t}\}$ with $i$ ranging from $1$ to
 $n,$ gives a global orthonormal frame. Moreover, the Lie brackets in $\Bbb{H}^n\times
 \Bbb{R}$ satisfies
$$[E_{l},E_{n}]=-E_{l},\,\,\hbox{when}\,\,l=1,...,n-1$$
and $$[E_{j},E_{k}]=0,$$ otherwise.

Therefore, we may use the Koszul's formula to obtain the following
Riemannian connections $\nabla_{E_{l}}E_{l}=E_{n}$ and
$\nabla_{E_{l}}E_{n}=-E_{l}$ with $l$ ranging from $1$ to $n-1$ and
$\nabla_{E_{j}}E_{k}=0$ in the other cases.

Now, we may use the Riemannian connection to deduce the following
lemma.

\begin{lemma}
\label{prop1} The Ricci tensor of $\Bbb{H}^n\times \Bbb{R}$ is given
by
\begin{equation*}
Ric=-(n-1)g+(n-1)dt^{2}.
\end{equation*}
\end{lemma}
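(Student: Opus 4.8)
The plan is to compute the Ricci tensor directly from the orthonormal frame $\{E_i = x_n\partial_{x_i}, E_{n+1}=\partial_t\}$ and the connection coefficients already recorded in the preliminaries. Recall that $\mathbb{H}^n \times \mathbb{R}$ is a Riemannian product, so I would first observe that the curvature splits: the $\mathbb{R}$-direction $E_{n+1}=\partial_t$ is parallel (all Christoffel symbols involving $E_{n+1}$ vanish), hence it is flat in every direction, while the $E_1,\dots,E_n$ block reproduces the curvature of hyperbolic space $\mathbb{H}^n$. Thus I expect $Ric(E_{n+1},\cdot)=0$ and $Ric$ restricted to the $\mathbb{H}^n$ factor to be $-(n-1)$ times the metric there; the stated formula $Ric = -(n-1)g + (n-1)\,dt^2$ is just the invariant way of writing ``$-(n-1)$ on the hyperbolic block, $0$ on the $\partial_t$ block,'' since $-(n-1)g + (n-1)dt^2$ evaluates to $-(n-1)$ on each $E_i$ ($1\le i\le n$) and to $-(n-1)+(n-1)=0$ on $E_{n+1}$.

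Concretely I would carry this out as follows. First, using $\nabla_{E_l}E_l = E_n$, $\nabla_{E_l}E_n = -E_l$ for $l=1,\dots,n-1$ and $\nabla_{E_j}E_k=0$ otherwise, together with the Lie brackets $[E_l,E_n]=-E_l$ ($l<n$) and all others zero, I would compute the curvature operator $R(X,Y)Z = \nabla_X\nabla_Y Z - \nabla_Y\nabla_X Z - \nabla_{[X,Y]}Z$ on frame triples. The nontrivial components come only from indices in $\{1,\dots,n\}$; a short calculation gives $\langle R(E_i,E_j)E_j,E_i\rangle = -1$ for $i\neq j$ in $\{1,\dots,n\}$ (constant sectional curvature $-1$ on $\mathbb{H}^n$), and every sectional curvature involving $E_{n+1}$ vanishes. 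Then $Ric(E_i,E_i) = \sum_{k\neq i}\langle R(E_i,E_k)E_k,E_i\rangle$: for $i\in\{1,\dots,n\}$ this sums $n-1$ copies of $-1$ (the terms $k=1,\dots,n$, $k\neq i$) plus a $0$ from $k=n+1$, giving $-(n-1)$; for $i=n+1$ every summand is $0$. Off-diagonal Ricci components vanish by the product structure (or a direct check). This matches $Ric = -(n-1)g + (n-1)dt^2$ evaluated on the frame, and since both sides are symmetric $2$-tensors agreeing on a frame, they agree.

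The only mildly delicate point is the bookkeeping in the curvature computation — keeping track of which covariant derivatives $\nabla_{E_i}E_j$ are nonzero and correctly handling the bracket term $\nabla_{[E_l,E_n]}E_j = -\nabla_{E_l}E_j$ when $l<n$ — but this is routine once the product structure is invoked to kill all terms touching $E_{n+1}$. I expect no genuine obstacle: the result is essentially the statement that $Ric_{\mathbb{H}^n \times \mathbb{R}} = Ric_{\mathbb{H}^n} \oplus Ric_{\mathbb{R}} = -(n-1)g_{\mathbb{H}^n} \oplus 0$, rewritten in a single formula on the product using that $dt^2$ is the metric of the $\mathbb{R}$ factor and $g - dt^2$ is the metric of the $\mathbb{H}^n$ factor.
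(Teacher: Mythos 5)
Your proposal is correct and follows essentially the same route as the paper: a direct computation of $Ric$ in the orthonormal frame $\{E_i=x_n\partial_{x_i},\,E_{n+1}=\partial_t\}$ using the stated connection coefficients and brackets, with the off-diagonal and $\partial_t$ components vanishing. The extra framing via the product splitting $Ric_{\mathbb{H}^n\times\mathbb{R}}=Ric_{\mathbb{H}^n}\oplus 0$ and constant curvature $-1$ is a harmless (and clarifying) addition, not a different argument.
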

\begin{proof}
Firstly, we recall that $Ric(X,Y)=\sum_{k=1}^{n+1} \langle
R(E_{k},X)Y,E_{k}\rangle,$ where $\{E_{1},...,E_{n+1}\}$ is an
orthonormal frame. From what it follows that for $l=1,...,n-1$ we have

\begin{eqnarray*}
Ric(E_{l},E_{l}) &=& \sum_{k\neq l}\langle \nabla_{E_{k}}\nabla_{E_{l}}E_{l}-\nabla_{E_{l}}\nabla_{E_{k}}E_{l}-\nabla_{[E_{k},E_{l}]}E_{l},E_{k}\rangle \\
                 &=& \langle\nabla_{E_{n}}E_{n}-\nabla_{[E_{n},E_{l}]}E_{l},E_{n}\rangle+\sum_{k\neq l,n}\langle \nabla_{E_{k}}E_{n}-\nabla_{[E_{k},E_{l}]}E_{l},E_{k}\rangle \\
                 &=& \langle-\nabla_{E_{l}}E_{l},E_{n}\rangle+\sum_{k\neq l,n}\langle \nabla_{E_{k}}E_{n},E_{k}\rangle \\
                 &=& -1+\sum_{k\neq l,k<n}\langle-E_{k},E_{k}\rangle \\
                 &=& -(n-1).
\end{eqnarray*}
In a similar way a straightforward computation gives
$Ric(E_{n},E_{n})=-(n-1),$ $Ric(E_{n+1},E_{n+1})=0$ and
$Ric(E_{i},E_{j})=0$ if $i\neq j,$ we left its checking  for the
reader. So, we finishes the proof of the lemma.
\end{proof}

Now, we consider that $\Bbb{H}^n\times \Bbb{R}$ admits a
quasi-Einstein structure and we use Lemma \ref{prop1} to
deduce the following lemma.
\begin{lemma}
\label{prop2}
 Let $\big(\mathbb{H}^{n}\times\mathbb{R},\,g,\,\nabla
f,\,\lambda)$ be a quasi-Einstein structure. Then the following
statements hold:
\begin{enumerate}
\item  $x_{n}^{2}\frac{\partial^{2}f}{\partial x_{i}^{2}}-x_{n}\frac{\partial f}{\partial x_{n}}=\frac{1}{m}x_{n}^{2}\left(\frac{\partial f}{\partial x_{i}}\right)^{2}+\lambda+(n-1)$, if $i<n$;
\item  $x_{n}\frac{\partial f}{\partial x_{n}}+x_{n}^{2}\frac{\partial^{2} f}{\partial x_{n}^{2}}=\frac{1}{m}x_{n}^{2}\left(\frac{\partial f}{\partial x_{n}}\right)^{2}+\lambda+(n-1)$;
\item  $\frac{\partial^{2}f}{\partial t^{2}}=\frac{1}{m}\left(\frac{\partial f}{\partial t}\right)^{2}+\lambda$;
\item  $\frac{\partial^{2}f}{\partial x_{i}\partial x_{j}}=\frac{1}{m}\frac{\partial f}{\partial x_{i}}\frac{\partial f}{\partial x_{j}}$, if $i<j<n$;
\item  $x_{n}^{2}\frac{\partial^{2}f}{\partial x_{i}\partial x_{n}}+x_{n}\frac{\partial f}{\partial x_{i}}=\frac{1}{m}x_{n}^{2}\frac{\partial f}{\partial x_{i}}\frac{\partial f}{\partial x_{n}}$, if $i<n$;
\item  $\frac{\partial^{2}f}{\partial x_{i}\partial t}=\frac{1}{m}\frac{\partial f}{\partial x_{i}}\frac{\partial f}{\partial t}$, if $i\leq n$.
\end{enumerate}
\end{lemma}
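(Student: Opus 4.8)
The plan is to substitute Lemma~\ref{prop1} into the fundamental equation (\ref{eqqem}) and then read off the six identities by evaluating the resulting tensor equation on the global orthonormal frame $\{E_1,\dots,E_n,E_{n+1}\}$. Writing $Ric=-(n-1)g+(n-1)dt^{2}$, equation (\ref{eqqem}) becomes
\begin{equation*}
\nabla^{2}f-\frac{1}{m}\,df\otimes df=\big(\lambda+(n-1)\big)g-(n-1)dt^{2},
\end{equation*}
so it suffices to compute the left-hand side on each pair $(E_a,E_b)$ and compare it with the right-hand side, which equals $\lambda+(n-1)$ on $(E_l,E_l)$ for $l<n$ and on $(E_n,E_n)$, equals $\lambda$ on $(E_{n+1},E_{n+1})$, and vanishes on every off-diagonal pair.

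For the Hessian I would use $\nabla^{2}f(X,Y)=X(Yf)-(\nabla_{X}Y)f$ together with the connection coefficients recorded above ($\nabla_{E_l}E_l=E_n$ and $\nabla_{E_l}E_n=-E_l$ for $l<n$, all others zero) and the translation $E_if=x_n\,\partial_{x_i}f$ for $i\le n$, $E_{n+1}f=\partial_t f$; for the quadratic term one simply has $df\otimes df(E_a,E_b)=(E_af)(E_bf)$. Running through the pairs $(E_l,E_l)$ with $l<n$, $(E_n,E_n)$, $(E_{n+1},E_{n+1})$, $(E_i,E_j)$ with $i<j<n$, $(E_i,E_n)$ with $i<n$, and $(E_i,E_{n+1})$ with $i\le n$ then yields statements (1)--(6) respectively, the covariant-derivative term contributing the $-x_n\,\partial f/\partial x_n$ in (1), the $+x_n\,\partial f/\partial x_i$ in (5), and nothing elsewhere.

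There is no genuine obstacle here — the statement is essentially a bookkeeping exercise — but the one point that needs care is the behaviour of the coordinate fields $E_i=x_n\partial_{x_i}$ under a second differentiation. Since $\partial_{x_i}x_n=\delta_{in}$, iterating $E_i$ when $i<n$ merely produces the factor $x_n^{2}$, whereas $E_n(E_nf)=x_n\partial_{x_n}(x_n\partial_{x_n}f)=x_n\frac{\partial f}{\partial x_n}+x_n^{2}\frac{\partial^{2}f}{\partial x_n^{2}}$ and $E_i(E_nf)=x_n^{2}\frac{\partial^{2}f}{\partial x_i\partial x_n}$ for $i<n$; tracking these extra first-order terms (and the analogous one from $\nabla_{E_i}E_n=-E_i$) is exactly what distinguishes the identities (1), (2), (5) from the cleaner ones (3), (4), (6). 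Once these computations are assembled, each of the six equations follows by a direct rearrangement, which completes the proof.
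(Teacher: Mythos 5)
Your proposal is correct and follows essentially the same route as the paper: both substitute the Ricci tensor from Lemma~\ref{prop1} into equation (\ref{eqqem}) and evaluate on the orthonormal frame $\{E_{1},\dots,E_{n+1}\}$, using $\nabla^{2}f(X,Y)=X(Yf)-(\nabla_{X}Y)f$ with the listed connection coefficients. Your bookkeeping of the first-order terms coming from $\nabla_{E_i}E_i=E_n$, $\nabla_{E_i}E_n=-E_i$, and the product rule in $E_n(E_nf)$ is exactly where the paper's computation lands as well.
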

\begin{proof}
In order to obtain the first item we may compute
$Ric_{f}^{m}(E_{i},E_{i})$ with $i<n$ in  equation
(\ref{eqqem}) to obtain
\begin{eqnarray*}
Ric(E_{i},E_{i})+Hess\,f(E_{i},E_{i})-\frac{1}{m}df\otimes
df(E_{i},E_{i})=\lambda.
\end{eqnarray*}
Since $E_{i}=x_{n}\partial_{x_{i}}$ we can apply Lemma
\ref{prop1} to obtain the first assertion.

Proceeding, we compute $Ric_{f}^{m}(E_{n},E_{n})$ and once more we
use Lemma \ref{prop1} to infer
\begin{eqnarray*}
-(n-1)+(n-1)\langle
E_{n},\partial_{t}\rangle^{2}+Hess\,f(E_{n},E_{n})-\frac{1}{m}df\otimes
df(E_{n},E_{n})=\lambda.
\end{eqnarray*}
Thus, we use that $E_{n}=x_{n}\partial_{x_{n}}$ to deduce the second
statement.

Analogously, computing $Ric_{f}^{m}(E_{n+1},E_{n+1}),$
$Ric_{f}^{m}(E_{i},E_{j})$ with $i<j<n,$ $Ric_{f}^{m}(E_{i},E_{n})$
with $i<n$ and $Ric_{f}^{m}(E_{i},E_{n+1})$ with $i\leq n$
straightforward computations give the desired statements, which
complete the proof of the lemma.
\end{proof}

As an application of Lemma \ref{prop2}, finally we have the following
lemma.

\begin{lemma}
\label{lem1} Let $\big(\mathbb{H}^{n}\times\mathbb{R},\,g,\,\nabla
f,\,\lambda)$ be a quasi-Einstein structure. Then either
$\frac{\partial f}{\partial t}(x,t)=\pm\sqrt{-m\lambda}$ or
$\frac{\partial f}{\partial t}(x,t)=-\sqrt{-m\lambda}\tanh(\mu
t+a),$ where $a=a(x)$ and $\mu=\sqrt{-\frac{\lambda}{m}}$.
\end{lemma}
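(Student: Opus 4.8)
The plan is to read item~(3) of \lemref{prop2} as an ordinary differential equation in the variable $t$, with $x$ playing the role of a parameter. Writing $h(x,t):=\frac{\partial f}{\partial t}(x,t)$, that item becomes the Riccati equation
\begin{equation*}
\frac{\partial h}{\partial t}=\frac{1}{m}h^{2}+\lambda .
\end{equation*}
Since $f$ is smooth on all of $\mathbb{H}^{n}\times\mathbb{R}$, for each fixed $x$ the function $t\mapsto h(x,t)$ is a smooth solution of this equation on the \emph{whole} line $\mathbb{R}$, and the integration ``constant'' is allowed to depend on $x$ while $\lambda$ (hence $\mu=\sqrt{-\lambda/m}$) does not. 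The whole argument is then the classification of such globally defined solutions.

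First I would settle the sign of $\lambda$. If $\lambda>0$, then $\partial_{t}h\geq\lambda>0$, so $h(x,\cdot)$ is strictly increasing and eventually positive; once $h(x,t_{0})=c>0$, comparison with $y'=y^{2}/m$ gives $h(x,t)\geq \frac{mc}{m-c(t-t_{0})}$, which blows up before $t=t_{0}+m/c$, contradicting smoothness. The same comparison kills any nonconstant solution when $\lambda=0$ (there a nonconstant solution is $h=-m/(t-k(x))$, singular at $t=k(x)$), leaving only $h\equiv 0=\pm\sqrt{-m\lambda}$, which is the first alternative. (Alternatively, $\lambda>0$ is excluded by quoting the cited fact that a noncompact nontrivial quasi-Einstein metric cannot be shrinking, together with the observation that $\mathbb{H}^{n}\times\mathbb{R}$ is not Einstein by \lemref{prop1}.) So from now on $\lambda<0$; set $\mu=\sqrt{-\lambda/m}>0$, so that $\sqrt{-m\lambda}=m\mu$ and the equation reads $\partial_{t}h=\frac{1}{m}(h-m\mu)(h+m\mu)$.

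Now fix $x$. If $h(x,t_{0})\in\{m\mu,-m\mu\}$ at some $t_{0}$, uniqueness for the ODE forces $h(x,\cdot)\equiv\pm\sqrt{-m\lambda}$, the first alternative. Otherwise $h(x,\cdot)$ is continuous and omits both $\pm m\mu$, hence stays in exactly one of $(-\infty,-m\mu)$, $(-m\mu,m\mu)$, $(m\mu,\infty)$ for all $t$, and separation of variables with partial fractions gives
\begin{equation*}
\frac{1}{2m\mu}\,\ln\!\left|\frac{h-m\mu}{h+m\mu}\right|=\frac{t}{m}+c(x),
\qquad\text{equivalently}\qquad \frac{h-m\mu}{h+m\mu}=B(x)\,e^{2\mu t},
\end{equation*}
for a nowhere-vanishing function $B(x)$, of definite sign according to which interval $h$ lives in. If $h(x,\cdot)$ lies in $(m\mu,\infty)$ or in $(-\infty,-m\mu)$ then $B(x)>0$, and solving for $h$ yields $h=-m\mu\coth(\mu t+a(x))$, which is singular at $\mu t+a(x)=0$ and hence cannot be a global solution — contradiction. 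So $h(x,\cdot)$ lies in $(-m\mu,m\mu)$, $B(x)<0$, and writing $B(x)=-e^{2a(x)}$ and solving for $h$ gives $h(x,t)=-m\mu\tanh(\mu t+a(x))=-\sqrt{-m\lambda}\tanh(\mu t+a(x))$, the second alternative.

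The one genuinely delicate point — everything else being routine separation of variables — is the use of existence for all $t\in\mathbb{R}$: it is precisely the completeness of the factor $\mathbb{R}$ and smoothness of $f$ that eliminate the finite-time blow-up solutions (when $\lambda\geq 0$) and the $\coth$-type solutions (when $\lambda<0$), leaving only the two constant solutions and the $\tanh$ family.
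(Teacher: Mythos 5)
Your proof is correct and follows essentially the same route as the paper: read item (3) of Lemma \ref{prop2} as a Riccati ODE in $t$, integrate by separation of variables, and use smoothness of $f$ on all of $\mathbb{H}^n\times\mathbb{R}$ to discard the finite-time-singular solutions ($\tan$, $-m/(t+mc)$, $\coth$), leaving the constants and the $\tanh$ family. You are in fact somewhat more careful than the paper at two points — replacing its dichotomy ``$h'\equiv 0$ or $h'\neq 0$'' by the cleaner ODE-uniqueness argument at $h=\pm m\mu$, and giving an explicit blow-up comparison to exclude $\lambda\geq 0$ — but the underlying argument is the same.
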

\begin{proof}
First,  we define $h(t)=\frac{\partial f}{\partial t}(x,t)$ for a fixed $x\in\mathbb{H}^{n}\times\mathbb{R}.$
Therefore, we use the third item of Lemma \ref{prop2} to write
\begin{equation}
\label{eqedo}
h'=\frac{h^{2}}{m}+\lambda.
\end{equation}
If $h' \equiv 0,$ then $h=\pm\sqrt{-m\lambda},$ which gives the
first assertion.

On the other hand, supposing $h^{'}\neq 0,$ then (\ref{eqedo})
can be rewrite as $$h'=\frac{1}{(m^{-1}h^{2}+\lambda)^{-1}}$$ and
thus it is a separable ODE. Therefore, its solutions are given by

$$h(t) = \sqrt{m\lambda}\tan\left(\sqrt{\frac{\lambda}{m}}(t+c)\right)\,\,\hbox{if}\,\,\lambda>0,$$
$$h(t) = -\frac{m}{t+mc},\,\,\hbox{if}\,\,\lambda=0$$
and
$$\left|\frac{h(t)-\sqrt{-m\lambda}}{h(t)+\sqrt{-m\lambda}}\right| =
\exp{\left(2\sqrt{-\frac{\lambda}{m}}(t+c)\right)},\,\,\hbox{if}\,\,\lambda<0,$$
where $c$ is a real constant. Now, we may delete the two first
solution by using the differentiability of $f.$ Therefore, it remains
only the last solution. In this case we have two possibilities
\begin{equation}
\label{eq1plmk}
\frac{h(t)-\sqrt{-m\lambda}}{h(t)+\sqrt{-m\lambda}}=\exp{\left(2\sqrt{-\frac{\lambda}{m}}(t+c)\right)}
\end{equation}
and
\begin{equation}
\label{eq2plmk}
\frac{h(t)-\sqrt{-m\lambda}}{h(t)+\sqrt{-m\lambda}}=-\exp{\left(2\sqrt{-\frac{\lambda}{m}}(t+c)\right)}.
\end{equation}

Taking into account (\ref{eq1plmk}) we consider
$\mu=\sqrt{-\frac{\lambda}{m}}$ to arrive at
\begin{eqnarray*}
h(t) &=& \sqrt{-m\lambda}\coth(-\mu(t+c)) ,
\end{eqnarray*}
which gives a contradiction with the differentiability of $h.$
Therefore, from (\ref{eq2plmk}) we conclude that
\begin{eqnarray*}
h(t) = \sqrt{-m\lambda}\tanh(-\mu(t+c)) = -\sqrt{-m\lambda}\tanh(\mu
t+a).
\end{eqnarray*}
This is what we wanted to prove.
\end{proof}

We point out that in Lemma \ref{lem1} the constant $\lambda$ must be negative; for more details see Propositon 3.6 in \cite{csw}, see also  \cite{case}.

\section{The Proof of Theorem \ref{thmA}}
\begin{proof}
First, since $\Bbb{H}^n\times \Bbb{R}$ admits a quasi-Einstein
structure, we obtain from Lemma \ref{lem1} two possibilities given
by
$$\frac{\partial f}{\partial t}(x,t)=\pm\sqrt{-m\lambda}$$
and $$\frac{\partial f}{\partial t}(x,t)=-\sqrt{-m\lambda}\tanh(\mu
t+a),$$ where $a=a(x)$ and $\mu=\sqrt{-\frac{\lambda}{m}}$.

In the first case we may use item $(6)$ of Lemma \ref{prop2} to
arrive at $\frac{\partial}{\partial x_{i}}\left(\frac{\partial
f}{\partial t}\right)=\frac{1}{m}\frac{\partial f}{\partial
x_{i}}\frac{\partial f}{\partial t}$. Hence
$\frac{1}{m}\frac{\partial f}{\partial
x_{i}}(\pm\sqrt{-m\lambda})=0.$ From what it follows that
$\frac{\partial f}{\partial x_{i}}=0$ for $i\leq n$, where we used
that $\lambda<0.$  Moreover, we use item $(1)$ of Lemma
\ref{prop2} to conclude $\lambda=-(n-1)$ and
$f(x,t)=\pm\sqrt{m(n-1)}t+c,$ $c$ constant. So, we prove the first part.

Proceeding we consider $\frac{\partial f}{\partial
t}(x,t)=-\sqrt{-m\lambda}\tanh(\mu t+a).$ Once more, we use item $(6)$
of Lemma \ref{prop2} to obtain

$$\frac{\partial}{\partial
x_{i}}\left(\frac{\partial f}{\partial
t}\right)=\frac{1}{m}\frac{\partial f}{\partial x_{i}}\frac{\partial
f}{\partial t},$$ which implies that
\begin{equation}
\label{eq1pthm1} sech^{2}(\mu t+a)\frac{\partial a}{\partial
x_{i}}=\frac{1}{m}\tanh(\mu t+a)\frac{\partial f}{\partial x_{i}},
\end{equation} for every $(x,t)\in \Bbb{H}^{n}\times \Bbb{R}.$

On the other hand, fixing $x$ and choosing $t$ such that $\tanh(\mu
t+a(x))=0$ we have from (\ref{eq1pthm1}) that $sech^{2}(\mu
t+a(x))\frac{\partial a}{\partial x_{i}}(x)=0.$ Since $sech^{2}(\mu
t+a(x))$ can not assume null value we conclude that $\frac{\partial
a}{\partial x_{i}}(x)=0$. Thus, since $x$ is arbitrary we obtain
$\frac{\partial a}{\partial x_{i}}\equiv 0,$ which implies that $a$
is constant. Therefore, we have $\frac{1}{m}\tanh(\mu
t+a)\frac{\partial f}{\partial x_{i}}=0$ for every
$(x,t)\in\Bbb{H}^n\times \Bbb{R}$. From what it follows that
$\frac{\partial f}{\partial x_{i}}=0$, for $i\leq n$. By using again
the first item of Lemma \ref{prop2} we arrive at $\lambda=-(n-1).$
So, we have finish the proof of the theorem.
\end{proof}

\end{document}